\newtheorem{thm}{Theorem}[section]
\newtheorem{cor}{Corollary}[section]
\newtheorem{ex}{Example}[section]
\newtheorem{remark}{Remark}[section]
\newtheorem{lemma}{Lemma}[section]
\newtheorem{defo}{Definition}[section]
\newtheorem{prop}{Proposition}[section]
\title{\textbf{ON GRADED U-NIL CLEAN RINGS}}
\author{ I. Namrok\footnote{namrokismail@gmail.com} }
\affil{}
\date{}
\begin{document}
\maketitle
\begin{abstract} In this paper we introduce and study the class of graded U-nil clean rings, as a generalization of graded nil-good class defined in \cite{5}. We also investigate the transfer of the graded U-nil cleaness to  matrix rings, and to graded group rings as well. The question of when  trivial ring extensions are graded U-nil clean is also investigated. \end{abstract}
\vskip0.1in
\textbf{keywords}: Graded rings and modules, U-nil clean ring, group ring, matrix ring, idealization of a module.\\ \\
\textbf{Mathematics Subject Classification}. Primary: 16W50. Secondary: 16U99, 16S34, 16S50.
\section{Introduction}

The year 1977 was the beginning of a new era of the introduction of new classes of rings, which generalize the concept of clean rings introduced by Nicholson \cite{11} in the same year. Indeed, Nicholson defined a clean ring as a ring in which every element is a sum of an idempotent and a unit. Ever since, and particularly over the last decade, many authors  have studied numerous  classes of rings as a generalization or a variation of clean rings; for instance, in 2016 Danchev  introduced in \cite{12}, an interesting class, called nil-good rings, in which every element is either nilpotent or a sum of a nilpotent and a unit. Danchev has also given   significant results related to extensions of such rings, such as group rings and matrix rings as well. In his turn, Khashan took the initiative in  \cite{3}, to enrich the literature related to  classes of rings, by introducing the class of U-nil clean rings, whose definition is given in the next section. The aim of this paper is studying the U-nil clean property from  graded ring theory point of view. Indeed, the idea of extending the properties of  rings to graded rings has been interesting many authors recently; for more mdetails, the reader is referred to \cite{5,19,16,17,15}. In particular, in \cite{5} the authors examined the nil-good property in graded rings; indeed in this paper, we introduce graded U-nil clean rings as a graded version of the concept given by Khashan in \cite{3} and as a generalization of the  graded nil-good notion defined in \cite{5}. We begin by giving basic properties and examples of graded U-nil clean rings. In addition, we establish a sufficient conditions for a graded ring to be graded U-nil clean, if the component which corresponds to the identity element of the grading  group is a U-nil clean ring. Then, we emphasis on the extensions of such rings, in particular the matrix rings, since it is the main goal of this paper. We also study the question of when group rings and trivial ring extensions are graded U-nil clean.

\section{Preliminaries}
Throughout this paper, all rings are assumed to be unital and associative. If $R$ is a ring then $J(R)$ denotes the Jacobson radical of $R$, $U(R)$ is the multiplicative group of units of $R$, $N(R)$ denotes the set of nilpotent elements of $R$, $Z(R)$ is the center of the ring $R$ and $Idem(R)$ stands for the set of idempotents of $R$.

Let $R$ be a ring, $G$ a group with identity element $e$, and let $\{R_g\}_{g\in G}$ be a family of additive subgroups of $R$. the ring $R$ is said to be $G$-graded (or simply graded) if $R= \bigoplus_{g\in G}R_g$ and $R_gR_h\subseteq R_{gh}$ for every $g,h\in G$. Let $h(R)= \bigcup_{g\in G}R_g $; elements of $h(R)$ are said homogeneous, and for every $g\in G$, the subgroup $R_g$ is called the $g$-th component of $R$. If $a\in R_g$, then we say that $a$ is of degree $g$. In addition, the degree of every homogeneous idempotent element is $e$. Furthermore, if $t\in Idem(R_e)$, then $tR$ is a graded ring with unity $t$ and the $g$-component $(tR)_g=tR_g$ for each $g\in G$. The set $sup(R)=\{g\in G, R_g \neq 0\}$ is called the \textit{support} of the graded ring $R$.

If $A=\oplus_{g\in G}A_g$ and $B=\oplus_{g\in G}B_g$ are  two $G$-graded rings;  a ring homomorphism $\epsilon:A \longrightarrow B$ is said to be \textit{graded-homomorphism} if $\epsilon(h(A))  \subseteq h(B)$. The ring homomorphism $\epsilon$ is called \textit{degree-preserving homomorphism} if $\epsilon(A_g)\subseteq B_g$ for every $g\in G$. 

Given $G$-graded rings $R_1,R_2,\dots,R_k$. The direct product $R=\prod_{i=1}^kR_i$ is a $G$-graded ring, where its $g$-th component is $R_g=\prod_{i=1}^kR_i^g$, with $R_i^g$ is the $g$-th component of the ring $R_i$. 

A proper right (left or two-sided) ideal $I$ of a $G$-graded ring $R= \oplus_{g\in G}R_g$ is called \textit{homogeneous} (or \textit{graded}), if $I= \oplus_{g\in G}I\cap R_g$. If $I$ is a two-sided homogeneous ideal, then $R/I$ is a $G$-graded ring where its $g$-th component is $(R/I)_g=R_g/I\cap R_g$. In addition, a homogeneous ideal is called \textit{graded-nil}, if every homogeneous element of this ideal is nilpotent. A homogeneous ideal $I$ of a graded ring $R$ is called \textit{graded-maximal}, if there is no other proper homogeneous ideal of $R$ which contains $I$; equivalently, according to page 46 in \cite{4}, $I$ is graded-maximal if $I+Rx=R$ for any $x\in h(R)\backslash I$. A graded ring is said to be \textit{graded-local}, if it has a unique graded-maximal right ideal. Moreover, the intersection of all graded-maximal right ideals of a graded ring $R$ defines the \textit{graded Jacobson radical} of $R$, denoted $J^g(R)$. Further, according to Proposition 2.9.1 in \cite{4} we have that, $J^g(R)$ is a homogeneous two-sided ideal.

If $R= \oplus_{g\in G}R_g$ is a $G$-graded ring, then a  $G$-graded $R$-module is an $R$-module $M$ such that $M= \oplus_{g\in G}M_g$, where $\{M_g\}_{g\in G}$ are additive subgroups of $M$, and $R_hM_g\subseteq M_{hg}$ for all $g,h\in G$.

Let $R= \oplus_{g\in G}R_g$ be a $G$-graded ring. According to \cite{8}, we have that the group ring $R[G]$ is $G$-graded where its $g$-th component is $(R[G])_g= \oplus_{h\in G}R_{gh^{-1}}h$ and with the multiplication defined by $(r_gg')(r_hh')=r_gr_h(h^{-1}g'hh')$, where $g,g',h$ and $h' \in G$, $r_g \in R_g$ and $r_h\in R_h$. Now, if $H$ is a normal subgroup of $G$, then according to \cite{4} $R[H]$ can be viewed as a $G$-graded ring, where its $g$-th component is $(R[H])_g=\oplus_{h\in H}R_{gh^{-1}}h$. In addition, the ring $R$ can be seen as a $G/H$-graded ring (see page 178 in \cite{4}) as follow $R=\bigoplus_{C\in G/H}R_C$ where $R_C=\oplus_{x\in C}R_x$. Clearly $R[H]$ can also be observed as a $G/H$-graded ring.

Let $A$ be a ring and $E$ an $(A,A)$-bimodule. We denote by $R:=A\propto E$ the set of pairs $(a,e)$ with pairwise addition and multiplication given by $(a,e)(b,f)=(ab,af+eb)$; the ring $R$ is called the \textit{idealization} of $E$ (over $A$). Considerable background concerning the idealization of modules over commutative rings can be found in \cite{7,20,21}. If $A$ is a $G$-graded ring  and $E$ a $G$-graded $(A,A)$-bimodule, then according to \cite{7} $R=A\propto E$ is $G$-graded with the $g$-th component $R_g=A_g\oplus E_g$.

If $R=\oplus_{g\in G}R_g$ is a $G$-graded ring, $n$ an arbitrary natural number and $\sigma=(g_1,\dots,g_n)\in G^n$. For every $\lambda \in G$, we denote by $M_n(R)_{\lambda}(\sigma)$ the additive subgroup of $M_n(R)$ formed by matrices $(a_{ij})_{n\times n}$, such that $a_{ij}\in R_{g_i\lambda g_j^{-1}}$ for $i,j \in \{1,\dots,n\}$. We have $M_n(R)= \bigoplus_{\lambda \in G}M_n(R)_{\lambda}(\sigma)$ which makes the matrix ring $M_n(R)$ a $G$-graded ring with respect to usual matrix addition and multiplication. Throughout this paper, if the matrix ring $M_n(R)$ is observed as a graded ring in the previous way then we denote it by $M_n(R)(\sigma)$.

An element $r$ of a ring $R$ is called \textit{unit regular} if $r=eu$ for some $e \in Idem(R)$ and $u\in U(R)$. Khashan \cite{3} defined a ring to be U-nil clean, if every element is a sum of a nilpotent and a unit regular. Obviously, idempotents and units are unit regular elements. Let $R=\oplus_{g\in G}R_g$  be a $G$-graded ring; we define a homogeneous element $r\in R_g$ (where $g\in G$) to be \textit{graded unit regular} if $r=eu$ for some $e\in Idem(R_e)$ and $u\in U(R)\cap R_g$. The symbol $gur(R)$ will stand for the set of graded unit regular elements of $R$; while $ur(R)$ is the set of unit regular elements of $R$.

\section{Graded U-nil clean rings}

Let $G$ be a group with identity $e$.

\begin{defo}\label{d1}
A homogeneous element $x$ of a $G$-graded ring $R$ is called graded U-nil clean if $x=r+n$ where $r\in gur(R)$ and $n\in N(R)\cap h(R)$. The ring $R$ is called graded U-nil clean if every  element of $h(R)$ is graded U-nil clean.
\end{defo}

\begin{remark}\label{r1}
If $x=r+n$ is a graded U-nil clean decomposition of a homogeneous element $x$ of the ring $R$. Then, we can easily show that $x$, $r$ and $n$ are all of the same degree. In fact, let $g$ (resp. $h$) be the degree of $x$ (resp. $r$). If $g\neq h$, then we will have either $x=n$ or $x=0$. Both cases yield to the contradiction $g=h$. Therefore, $g=h$ and necessarily the degree of $n$ is also $g$.
\end{remark}

\begin{ex}\label{e1}
Let $A$ be any U-nil clean ring (for instance $\mathbb{Z}_3$ since it is a field), and let $G=\{e,g,h\}$ be a cyclic group of order three. The ring $R=M_2(A)$ of $2 \times 2$ matrices with coefficients in $A$ is clearly $G$-graded, with the grading: $R_e= \begin{pmatrix} A & 0 \\ 0 & A \end{pmatrix}$, $R_g=\begin{pmatrix} 0 & A \\ 0 & 0 \end{pmatrix}$ and $R_h=\begin{pmatrix} 0 & 0 \\ A & 0 \end{pmatrix}$. It is obvious that $R_e$ is U-nil clean. Since every element of $R_g$ and $R_h$ is nilpotent, then $R$ is a graded U-nil clean ring.
\end{ex}

The following example shows that the class of graded nil-good rings is a proper subclass of graded U-nil clean rings class. According to \cite{5}, a graded ring is said to be \textit{graded nil-good} if every homogeneous element is either nilpotent, or a sum of a homogeneous unit and a homogeneous nilpotent.

\begin{ex}\label{e3}
Consider $R=M_2(\mathbb{Z}_2)$ and $G$ a cyclic group of order two. Then, $ R=\begin{pmatrix} 
\mathbb{Z}_2 & 0 \\
0& \mathbb{Z}_2
\end{pmatrix} \bigoplus \begin{pmatrix} 
0 & \mathbb{Z}_2 \\
\mathbb{Z}_2 & 0
\end{pmatrix} $ is a $G$-graded ring which is not graded nil-good by \cite[Example 2.6]{5}. On the other hand, the homogeneous elements $\begin{pmatrix}
1 & 0 \\ 0 & 0
\end{pmatrix}$ and $\begin{pmatrix}
0 & 0 \\ 0 & 1
\end{pmatrix}$ are idempotents. Moreover, $\begin{pmatrix}
0 & 1 \\ 0 & 0
\end{pmatrix}$ and $\begin{pmatrix}
0 & 0 \\ 1 & 0
\end{pmatrix}$ are both nilpotents. Further, $A=\begin{pmatrix}
0 & 1 \\ 1 & 0
\end{pmatrix}$ is a unit since $A^2=\begin{pmatrix}
1 & 0 \\ 0 & 1
\end{pmatrix}$. Hence, every homogeneous element of $R$ is graded U-nil clean; thus $R$ is graded U-nil clean.
\end{ex}

\begin{prop}\label{p1}
Let $R=\oplus_{g\in G}R_g$ be a $G$-graded U-nil clean ring. Then,
\begin{enumerate}
    \item $R_e$ is a U-nil clean ring.
    \item Every graded-homomorphic image of $R$ is $G$-graded U-nil clean.
\end{enumerate}
\end{prop}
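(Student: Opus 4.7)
For part (1), the plan is to start with an arbitrary $x \in R_e$, view it as a homogeneous element of $R$, and apply the graded U-nil clean hypothesis to obtain a decomposition $x = r + n$ with $r \in gur(R)$ and $n \in N(R) \cap h(R)$. By Remark \ref{r1}, both $r$ and $n$ sit in $R_e$. Writing $r = fu$ with $f \in Idem(R_e)$ and $u \in U(R) \cap R_e$, the only nontrivial step is to argue that $u$ is in fact a unit of $R_e$: if $v = \sum_{h} v_h$ is its inverse in $R$, then comparing degrees in $uv = 1 = vu$ forces $v_h = 0$ for $h \neq e$ and $uv_e = v_e u = 1$, so $u^{-1} \in R_e$. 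Thus $r = fu$ is a unit regular element of $R_e$, and $n$ is nilpotent in $R_e$, so $R_e$ is U-nil clean.

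For part (2), let $\epsilon : R \to S$ be a graded-homomorphism with image $S' = \epsilon(R)$, which is a $G$-graded subring of $S$; for notational cleanness I will assume $\epsilon$ is degree-preserving and surjective onto $S'$, so that $S'_g = \epsilon(R_g)$. Pick any $y \in h(S')$, say $y \in S'_g$, and lift it to some $x \in R_g$. Applying the graded U-nil clean hypothesis and Remark \ref{r1} gives $x = fu + n$ with $f \in Idem(R_e)$, $u \in U(R) \cap R_g$, and $n \in N(R) \cap R_g$. Pushing this decomposition forward yields $y = \epsilon(f)\epsilon(u) + \epsilon(n)$, where $\epsilon(f) \in Idem(S'_e)$ (since degree is preserved and $\epsilon$ sends idempotents to idempotents), $\epsilon(u) \in U(S') \cap S'_g$ (the image of a unit under a surjective ring map is a unit in the image), and $\epsilon(n) \in N(S') \cap h(S')$ (nilpotency and homogeneity of degree $g$ are preserved). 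Hence $y$ is graded U-nil clean in $S'$.

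The main obstacle — really a subtlety rather than a difficulty — is the verification in part (1) that a unit of $R$ which happens to live in $R_e$ is automatically a unit of $R_e$; this is what makes the graded unit regular decomposition in $R$ restrict to an honest unit regular decomposition in $R_e$. The remainder consists of transporting homogeneous decompositions through the relevant maps, which is routine once the degree bookkeeping from Remark \ref{r1} is in place.
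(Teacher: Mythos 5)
Your proposal is correct and follows essentially the same route as the paper: apply the graded U-nil clean decomposition, use Remark \ref{r1} to place the pieces in the right components, and push decompositions through the graded-homomorphism for part (2). The only difference is that you explicitly verify that a unit of $R$ lying in $R_e$ has its inverse in $R_e$ — a detail the paper's proof leaves implicit — and this verification is correct.
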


\begin{proof}
\begin{enumerate}
    \item Let $a \in R_e$. Since $R$ is graded U-nil clean, then $a=r+n$ where $r\in gur(R)$ and $n \in N(R)\cap h(R)$. According to Remark \ref{r1} we have $r$ and $n$ are both elements of $R_e$. Therefore, the ring $R_e$ is U-nil clean.
    \item Consider $S$ a G-graded ring and $\varphi :R \longrightarrow S$  a graded-homomorphism. Suppose that $R$ is graded U-nil clean and let $a\in h(R)$; our goal is to show that $\varphi(a)$ is  graded U-nil clean of the ring $\varphi(R)$. By assumption $a$ can be written $a=fu+n$ where $ f \in Idem(R_e)$, $u \in U(R)\cap h(R)$ and $n \in N(R)\cap h(R)$. Hence,  $\varphi(a)=\varphi(f)\varphi(u)+\varphi(n)$. Since $\varphi$ is a graded-homomorphism, then $\varphi(a)$ has been written as a sum of a graded unit regular element and a homogeneous nilpotent element of the ring $\varphi(R)$. Therefore, $\varphi(R)$ is graded U-nil clean.
\end{enumerate}
\end{proof}

The first point of Proposition \ref{p1}, makes us wondering if the U-nil clean property of $R_e$ implies that $R=\oplus_{g \in G}R_g$ is graded U-nil clean. Well, the next example proves that is false in general.

\begin{ex}\label{e4}
Let $R:=\mathbb{Z}_2[X]$. We have that $R=\oplus_{n\in \mathbb{Z}}R_n$ where $R_n=\mathbb{Z}_2X^n$ for $n \geq 0$ and $R_n=0$ for $n < 0$. Hence, $R$ is a $\mathbb{Z}$-graded ring. Obviously, $R_0=\mathbb{Z}_2$ is a U-nil clean ring since it is a field. On the other hand, it is clearly that the homogeneous element $X \in R_1$ is not graded U-nil clean. Hence, $R$ is not graded U-nil clean.
\end{ex}

\begin{prop}\label{p4}
Let $R_1,R_2,\dots,R_k$ be $G$-graded rings. If $R_i$ is graded U-nil clean for every $i\in \{1,2,\dots,k\}$, then $R=\prod_{i=1}^kR_i$ is a $G$-graded U-nil clean ring.
\end{prop}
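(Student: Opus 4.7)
The plan is to unwind definitions componentwise. A homogeneous element $a \in h(R)$ of degree $g$ is, by the definition of the product grading recalled in the Preliminaries, a tuple $a=(a_1,\dots,a_k)$ with $a_i \in R_i^g$ for each $i$. Since each factor $R_i$ is graded U-nil clean, I can invoke Definition \ref{d1} on every coordinate to obtain a decomposition $a_i = e_i u_i + n_i$ with $e_i \in Idem((R_i)_e)$, $u_i \in U(R_i)\cap h(R_i)$ and $n_i \in N(R_i)\cap h(R_i)$.

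The next step is to use Remark \ref{r1}: applied in each $R_i$, it forces $u_i$ and $n_i$ to lie in $R_i^g$, the same component as $a_i$. This homogeneity is what makes the assembly into a single tuple work cleanly. I would then set
\[
e := (e_1,\dots,e_k), \qquad u := (u_1,\dots,u_k), \qquad n := (n_1,\dots,n_k),
\]
and verify three things: first, $e \in Idem(R_e)$ because the identity component of $R$ is the product of the identity components of the $R_i$'s and each $e_i$ is idempotent there; second, $u \in U(R)\cap R_g$ since units pass to direct products coordinatewise and each $u_i$ already lies in $R_i^g$; third, $n \in N(R)\cap R_g$, where nilpotency follows by taking $m = \max_i m_i$ with $n_i^{m_i}=0$ so that $n^m=0$, and homogeneity of degree $g$ again follows coordinatewise.

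Putting these together, $eu = (e_1u_1,\dots,e_ku_k)$ is a graded unit regular element of $R$ of degree $g$, that is $eu \in gur(R)$, and $a = eu+n$ is the required graded U-nil clean decomposition. Since $a$ was an arbitrary homogeneous element of $R$, the ring $R$ is graded U-nil clean.

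There is essentially no obstacle here; the only point that requires care is the use of Remark \ref{r1} to guarantee that the $u_i$ and $n_i$ sit in the correct graded components so that the assembled tuples $u$ and $n$ are themselves homogeneous of degree $g$ in $R$. Without this observation one might worry that the coordinates live in different components and the assembled element fails to be homogeneous, so this is the only step I would spell out explicitly.
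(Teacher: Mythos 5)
Your proof is correct and takes essentially the same componentwise route as the paper's own argument: decompose each coordinate, then assemble the tuples. You merely make explicit (the separate construction of the idempotent tuple and the unit tuple, and the appeal to Remark \ref{r1} for homogeneity of degree $g$) what the paper compresses into ``it is clear that''.
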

\begin{proof}
Let $(r_1,r_2,\dots,r_k)$ be a homogeneous element of $R$ of degree $g$ where $g\in G$. For each $i$ we choose $a_i \in gur(R_i)$ and $n_i\in N(R_i)\cap h(R_i)$ such that $r_i=a_i+n_i$. It is clear that $a=(a_1,a_2,\dots,a_k)\in ur(R)\cap h(R)$ and $n=(n_1,n_2,\dots,n_k)\in N(R)\cap h(R)$; it follows that $r=a+n$ and therefore $R$ is $G$-graded U-nil clean.
\end{proof}

It has been proved in \cite[Proposition 2.1]{3} by Khashan that any U-nil clean ring is a $G$-clean ring ($G$ refers to the word "generalization" and not to the group mentioned in the head of this section). Let us recall first the definition of a $G$-clean ring introduced and studied by Hongbo and Weting in \cite{23}. A ring is defined to be $G$-clean if every element is a sum of a unit and a unit regular. The class of $G$-clean rings is a generalization of the class of clean rings introduced by Nicholson. In graded ring theory, if $H$ is a group; we define naturally in this paper, a $H$-graded ring to be graded $G$-clean if every homogeneous element is a sum of a homogeneous unit and a graded unit regular element.  The next result gives a graded version of  Proposition 2.1 in \cite{3}.

\begin{prop}\label{p2}

Let $H$ be a group with identity $e'$ and $R=\oplus_{g \in H} R_g$ a $H$-graded  commutative ring. If $R$ is graded U-nil clean, then it is a graded $G$-clean ring.
\end{prop}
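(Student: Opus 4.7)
The plan is to take a graded U-nil clean decomposition $a = fu + n$ of an arbitrary homogeneous element $a \in R_g$ — where by Definition \ref{d1} and Remark \ref{r1} we may assume $f \in Idem(R_e)$, $u \in U(R) \cap R_g$, and $n \in N(R) \cap R_g$ — and then to algebraically rearrange it into a graded $G$-clean decomposition $a = v + s$ with $v \in U(R) \cap h(R)$ and $s \in gur(R)$.

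First I would exhibit a homogeneous unit absorbing the nilpotent part. Since $R$ is commutative, $u^{-1}n$ is nilpotent (its $k$-th power equals $u^{-k}n^{k}$, which vanishes for $k$ large), so $1 + u^{-1}n \in U(R)$ and hence $v := u + n = u\bigl(1 + u^{-1}n\bigr)$ is a unit of $R$. Because $u$ and $n$ both lie in $R_g$, this unit is homogeneous of degree $g$.

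Next I would use the simple identity
\[
a \;=\; fu + n \;=\; (u + n) + (1-f)(-u) \;=\; v + s,
\]
where $s := (1-f)(-u)$. Since $1-f \in Idem(R_e)$ and $-u \in U(R) \cap R_g$, the element $s$ matches the defining form of a graded unit regular element introduced in the preliminaries, and it lies in $R_g$. This furnishes the desired graded $G$-clean decomposition of $a$, and since $a \in h(R)$ was arbitrary, $R$ is graded $G$-clean.

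The only delicate point is showing $u+n \in U(R)$: this is precisely where the commutativity hypothesis is used, through the fact that a commuting sum of a unit and a nilpotent is again a unit. Everything else is a routine verification using Remark \ref{r1} to keep track of degrees.
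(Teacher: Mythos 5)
Your proof is correct, and it takes a genuinely different route from the paper's. The paper first passes to the identity component: it invokes Proposition \ref{p1} to get that $R_{e'}$ is U-nil clean, then cites Khashan's Proposition 2.1 to conclude that $R_{e'}$ is $G$-clean, decomposes the idempotent $f$ as $f = s + u$ with $s \in ur(R_{e'})$ and $u \in U(R_{e'})$, and finally reassembles $a = sx + (ux + n)$, checking as you do that a unit plus a commuting nilpotent is a unit. Your argument short-circuits all of this with the single identity $fu + n = (u+n) + (1-f)(-u)$: the term $u+n$ is a homogeneous unit of degree $g$ (this is the one place commutativity enters, exactly as you say), and $(1-f)(-u)$ is literally in the form $e'u'$ with $e' = 1-f \in Idem(R_e)$ and $u' = -u \in U(R)\cap R_g$ required by the definition of a graded unit regular element, with the degree bookkeeping $R_e R_g \subseteq R_g$ immediate. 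What your approach buys is self-containment --- no appeal to the ungraded result of \cite{3} is needed --- and a visibly explicit decomposition; what the paper's approach buys is a proof pattern that exhibits the graded statement as a lift of the corresponding ungraded one through the $e'$-component, which is the organizing theme of several other results in the paper.
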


\begin{proof}
Suppose that $R$ is graded U-nil clean, then by Proposition \ref{p1} $R_{e'}$ is a U-nil clean ring. Thus, according to \cite[Proposition 2.1]{3} the ring $R_{e'}$ is a $G$-clean ring. Now, let $a\in R_g$ (where $g\in H)$ be a homogeneous element of $R$. Since $R$ is graded U-nil clean, then there exist $r\in gur(R)\cap R_g$ and $n\in N(R)\cap R_g$ such that $a=r+n$. We know that there exist $x \in U(R)\cap R_g$  and  $f \in Idem(R_{e'})$ such that $r=fx$. Now, since $R_{e'}$ is $G$-clean, then $f=s+u$, where $s\in ur(R_{e'})$   and  $u\in U(R_{e'})$. Thus, $a$ is written  $a=sx+(ux+n)$. Since $R$ is commutative, then $ux+n \in U(R)\cap R_g$. Moreover, $s=fv$ for some $f\in Idem(R_{e'})$ and $v \in U(R_{e'})$. Hence, $sx=f(vx)$ is then graded unit regular of degree $g$. Finally, $a$ has been written as a sum of graded unit regular and a homogeneous unit, which means that $R$ is $H$-graded $G$-clean.
\end{proof}

\begin{remark}
It is well known that, if $I$ is a nil ideal of a ring $R$ then, the ideal $I$ is contained in the classical Jacobson radical of $R$ (i.e. $I\subseteq J(R))$. Next, we will give a graded approach of this result.
\end{remark}

\begin{lemma}\label{l2}
Let $R=\oplus_{g\in G} R_g$ be a $G$-graded ring and $I$ a  homogeneous ideal of $R$. If $I$ is graded-nil then $I\subseteq J^g(R)$.
\end{lemma}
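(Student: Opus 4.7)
The plan is to show $I$ lies inside every graded-maximal right ideal $M$ of $R$, by deriving a contradiction from the assumption that some homogeneous $x \in I$ avoids $M$.

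Let $M$ be an arbitrary graded-maximal right ideal. Suppose, for contradiction, that $I \not\subseteq M$. Since $I$ is homogeneous, $I = \bigoplus_{g\in G}(I\cap R_g)$, so we can pick a homogeneous $x\in I$ with $x \notin M$; say $x\in R_h$. By the characterization of graded-maximality recalled on page 46 of \cite{4}, there exist $m\in M$ and $r\in R$ such that $1 = m + xr$ (or $1 = m + rx$; the argument below is symmetric).

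Now comes the key step: decompose everything into homogeneous components and read off the degree-$e$ part. Writing $r = \sum_{g\in G} r_g$ with $r_g\in R_g$, we have $xr_g \in R_{hg}$, so the degree-$e$ component of $xr$ is the single term $xr_{h^{-1}}$. Since $M$ is a homogeneous ideal, its degree-$e$ component $m_e$ lies in $M\cap R_e$. Comparing the degree-$e$ parts of $1 = m + xr$ therefore yields
\[
1 \;=\; m_e \;+\; x r_{h^{-1}},
\]
an identity in $R_e$. Crucially, $xr_{h^{-1}}$ is a \emph{homogeneous} element of the two-sided ideal $I$, so by the graded-nil hypothesis it is nilpotent.

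From here the contradiction is immediate: a standard geometric-series argument shows that $1 - x r_{h^{-1}}$ is a unit of $R$, hence $m_e = 1 - xr_{h^{-1}} \in U(R)\cap M$, forcing $M = R$ and contradicting the properness of $M$. Thus $I \subseteq M$, and since $M$ was an arbitrary graded-maximal right ideal, $I \subseteq J^g(R)$. The only real obstacle is the non-homogeneity of the element $r$ arising from the maximality characterization; the trick of projecting onto $R_e$ to isolate a single homogeneous product $xr_{h^{-1}}$ (which is where the graded-nil hypothesis bites) is what makes the argument go through.
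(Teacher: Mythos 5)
Your proof is correct, and it shares the paper's overall skeleton --- fix a graded-maximal (right) ideal $M$ missing some homogeneous $x\in I$, invoke the characterization $M+Rx=R$ to write $1=m+rx$, and derive a contradiction from nilpotence --- but the way you close the argument is genuinely different. The paper passes to the quotient $R/M$ and observes that $\bar r\,\bar x=\bar 1$ is impossible because $\bar x$ is nilpotent (a nilpotent element cannot admit a one-sided inverse). You instead never leave $R$: you project the identity $1=m+rx$ onto the degree-$e$ component, isolating the single homogeneous product $r_{h^{-1}}x\in I\cap R_e$, which the graded-nil hypothesis makes nilpotent, so that $m_e=1-r_{h^{-1}}x$ is a unit sitting inside the proper ideal $M$. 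Your variant buys some robustness: it avoids forming the ring $R/M$ (delicate if graded-maximal ideals are taken one-sided, as the paper's definition of $J^g(R)$ via right ideals suggests), and it applies the graded-nil hypothesis only to a homogeneous element of $I$, which is exactly where that hypothesis is available. The paper's version is shorter because it only needs the nilpotence of the chosen homogeneous element $x$ itself rather than of a product. Both arguments are sound.
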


\begin{proof}
Suppose that $I$ is a graded-nil ideal. Let $a$ be a homogeneous element of $I$; since $I$ is a proper ideal then $Ra\neq R$. Assume that $a \notin J^g(R)$, hence there exists a graded-maximal ideal $M$ of $R$ such that $a \notin M$. Therefore, as mentioned in the preliminaries of this paper, we have that $M+Ra=R$. Thus, $m+ra=1$ for some $m\in M$ and $r\in R$. Hence, $\overline{r}\overline{a}=\overline{1}$ in $R/M$ which is a contradiction since $\overline{a}$ is nilpotent in $R/M$. Therefore $I \subseteq J^g(R)$.
\end{proof}

\begin{thm}\label{t1}
Let $R=\oplus_{g \in G} R_g$ be a $G$-graded ring and $I$ a graded-nil ideal of $R$. Then $R$ is a graded U-nil clean ring if and only if so is $R/I$.
\end{thm}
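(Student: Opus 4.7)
The plan is to handle the two implications separately. The forward direction should be immediate: since $I$ is a homogeneous two-sided ideal, $R/I$ is $G$-graded and the canonical projection $\pi:R\to R/I$ is a degree-preserving graded ring homomorphism, so Proposition \ref{p1}(2) applied to $\pi$ forces $R/I$ to be graded U-nil clean.

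For the converse, I would assume $R/I$ is graded U-nil clean, fix a homogeneous $a\in R_g$, and use the hypothesis to write $\bar{a}=\bar{e}\bar{u}+\bar{n}$ in $R/I$ with $\bar{e}\in Idem((R/I)_e)$, $\bar{u}\in U(R/I)\cap(R/I)_g$ and $\bar{n}\in N(R/I)\cap(R/I)_g$ (all of the same degree by Remark \ref{r1} applied inside $R/I$), and then lift each piece. The first lift is standard: because every homogeneous element of $I$ is nilpotent, $I\cap R_e$ is a genuine nil ideal of the ordinary ring $R_e$, so the classical lifting of idempotents modulo nil ideals produces an $f\in Idem(R_e)$ reducing to $\bar{e}$.

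The second, more delicate, lift is that of $\bar{u}$. The key observation is that in a graded ring over a group $G$, the inverse of any homogeneous unit of degree $g$ is automatically homogeneous of degree $g^{-1}$ (a bookkeeping of components in $uv=vu=1$). Applying this inside $R/I$, I would pick homogeneous lifts $u\in R_g$ of $\bar{u}$ and $v\in R_{g^{-1}}$ of $\bar{u}^{-1}$; then both $uv-1$ and $vu-1$ lie in $I\cap R_e$ and are nilpotent by the graded-nil hypothesis, so $uv$ and $vu$ are units of $R$, which forces $u$ itself to be a two-sided unit of $R$ sitting inside $R_g$. Finally, setting $n:=a-fu\in R_g$, the image $\bar{n}$ in $R/I$ is the nilpotent term of the decomposition of $\bar{a}$, so some power $n^m$ lies in $I\cap R_{g^m}$; being a homogeneous element of the graded-nil ideal $I$, it is nilpotent, hence so is $n$. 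The identity $a=fu+n$ then exhibits $a$ as graded U-nil clean.

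The main obstacle I expect is the unit-lifting step. In an arbitrary graded quotient the inverse of a homogeneous unit is not a priori homogeneous, and without control over the degree of the inverse there is no way to ensure that a chosen homogeneous lift of $\bar{u}$ is a unit of $R$: the standard nil-radical trick requires $uv-1$ to land in $R_e$ so that $1+(uv-1)$ is unipotent. The fact that over a group the inverse of a homogeneous unit is homogeneous of the reciprocal degree is what makes the argument go through, and it also guarantees that the product $fu$ stays in $R_g$, so that $fu$ is genuinely a \emph{graded} unit regular element rather than merely a unit regular element of some unspecified degree.
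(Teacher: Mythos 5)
Your proof is correct, and its overall skeleton matches the paper's: the forward direction via Proposition \ref{p1}(2), and the converse by lifting the idempotent modulo the nil ideal $I\cap R_e$ of $R_e$, lifting the unit, and observing that $a-fu$ is a homogeneous element whose image in $R/I$ is nilpotent, hence is itself nilpotent because $I$ is graded-nil. The one step where you genuinely diverge is the lifting of the homogeneous unit. The paper routes this through the graded Jacobson radical: by Lemma \ref{l2} the graded-nil ideal $I$ is contained in $J^g(R)$, so $\bar u$ remains a unit modulo $J^g(R)$, and then the cited Proposition 2.9.1 of N\u{a}st\u{a}sescu--Van Oystaeyen (homogeneous elements invertible modulo $J^g(R)$ are invertible) yields $u\in U(R)$. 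You instead give a self-contained, elementary argument: the inverse of a homogeneous unit of degree $g$ is homogeneous of degree $g^{-1}$ (the component bookkeeping you describe is exactly right), so one can choose homogeneous lifts $u\in R_g$ and $v\in R_{g^{-1}}$ with $uv-1,\,vu-1\in I\cap R_e$ nilpotent, whence $uv$ and $vu$ are unipotent units and $u$ is a two-sided unit of $R$ lying in $R_g$. Your version buys independence from Lemma \ref{l2} and from the external graded-Nakayama reference, at the cost of a slightly longer local computation; the paper's version is shorter given that machinery and reuses Lemma \ref{l2}, which it needs elsewhere anyway. Both correctly ensure $fu\in gur(R)$ and both handle the nilpotent remainder identically, so there is no gap in your argument.
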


\begin{proof}
''$\Longrightarrow$'' Suppose  $R$ is graded U-nil clean. The canonical map $\pi: R \longrightarrow R/I$ is clearly a graded-homomorphism. Now, according to proposition \ref{p1} $R/I$ is graded U-nil clean as a the graded-homomorphic image of $R$ by $\pi$.

''$\Longleftarrow$'' Let us suppose that $\overline{R}=R/I$ is graded U-nil clean and let $a\in R_g$ where $ g\in G$. There exist then $\overline{f}\in Idem(\overline{R}_e)$, $\overline{u} \in U(\overline{R})\cap \overline{R}_g$ and $\overline{n} \in N(\overline{R})\cap \overline{R}_g$ such that $\overline{a}=\overline{f}\overline{u}+\overline{n}$. On the other hand, $I_e=I\cap R_e$ is clearly a nil ideal of the ring $R_e$, hence by applying \cite[Proposition 27.1]{2}  to $R_e$, we have that $\overline{f}$ can be lifted to an idempotent element of $R_e$, thus we can suppose that $ f\in Idem(R_e)$. Now, since $I$ is graded-nil, then according to Lemma \ref{l2} $I \subseteq J^g(R)$; therefore, $\overline{u} \in U(R/J^g(R))$. According to \cite[Proposition 2.9.1]{4}, we have that $u \in U(R)$. We know that $\overline{a-fu}$ is a homogeneous nilpotent element of $R/I$. Thus, since $I$ is graded-nil, it follows that $a-fu$ is a homogeneous nilpotent element of $R$. Finally, $a$ is graded U-nil clean, and the proof is achieved. 
\end{proof}

Concerning the condition ''$I$ is graded-nil'' in Theorem \ref{t1}, it can't be neglected. Indeed, let $S=\mathbb{Z}_3[X]$; we have that $S=\oplus_{n\in \mathbb{Z}}S_n$  where $S_n=\mathbb{Z}_3X^n$ for $n \geq 0$ and $S_n=0$ for $n < 0$, which means that $S$ is $\mathbb{Z}$-graded. It is clear that $S$ is not graded U-nil clean (the homogeneous element $X$ is not graded U-nil clean). Now, the homogeneous ideal $I=(X)$ of $S$ is clearly not graded-nil. Yet, the ring $S/I$ is graded U-nil clean as an isomorphic image of the field $\mathbb{Z}_3$.

\begin{prop}\label{p3}

Let $R=\oplus_{g \in G} R_g$ be a $G$-graded U-nil clean ring. If $Idem(R_e) \subseteq Z(R)$, then $J^g(R)$ is graded-nil.
\end{prop}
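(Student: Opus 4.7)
The plan is to show that every homogeneous element $a$ of $J^g(R)$ is nilpotent, by using the graded U-nil clean decomposition to produce a central idempotent $f$, and then forcing $f=0$. Write $a\in J^g(R)\cap R_g$ for some $g\in G$, and decompose $a=fu+n$ with $f\in Idem(R_e)$ (which is central by hypothesis), $u\in U(R)\cap R_g$, and $n\in N(R)\cap R_g$, the common degree being guaranteed by Remark \ref{r1}.

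First I would pass to the corner ring $fR$. Since $f$ is a central idempotent, $R\cong fR\times(1-f)R$ as $G$-graded rings, so $fR$ is a $G$-graded ring with identity $f$ and $J^g(fR)=fJ^g(R)$. Moreover $fu\in U(fR)\cap(fR)_g$, with inverse $fu^{-1}$ (again using centrality of $f$), and $fn$ is nilpotent in $fR$ since $(fn)^k=fn^k=0$ for large $k$.

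The second and key step is to establish that $-fu^{-1}n$ is a unit in $fR$. Since $u^{-1}a\in J^g(R)$ and the centrality of $f$ gives $fu^{-1}a=fu^{-1}(fu+n)=f+fu^{-1}n$, we obtain $f+fu^{-1}n\in fJ^g(R)=J^g(fR)$. Applying the standard consequence of \cite[Proposition 2.9.1]{4} inside $fR$, namely that $f-z$ is a unit of $fR$ for every $z\in J^g(fR)$ (the same property already invoked in the proof of Theorem \ref{t1}), I conclude that $-fu^{-1}n=f-(f+fu^{-1}n)$ is a unit of $fR$.

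The last step is a direct collision. Multiplying two units of $fR$, we get $(fu)(-fu^{-1}n)=-fn$, so $-fn$ is a unit of $fR$; but $fn$ is also nilpotent, and a nilpotent unit forces the ring to be zero. Hence $fR=0$, which gives $f=0$, and therefore $a=n\in N(R)\cap h(R)$. This shows every homogeneous element of $J^g(R)$ is nilpotent, i.e., $J^g(R)$ is graded-nil.

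The main obstacle I expect is the second step: converting membership $f+fu^{-1}n\in J^g(fR)$ into invertibility of $-fu^{-1}n$ in $fR$. Everything else is routine manipulation using the centrality of $f$, but this step genuinely relies on the behavior of the graded Jacobson radical under the central-idempotent splitting together with the graded analogue of the classical quasi-regularity property.
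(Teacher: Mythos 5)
Your argument is essentially correct and is a genuinely different route from the paper's. The paper keeps everything inside $R$: it expands $(fu-j)^k=(-n)^k=0$ binomially, extracts $fu^k\in J^g(R)$, deduces $f\in J^g(R)\cap R_e=J(R_e)$, and kills $f$ because $J(R_e)$ is nil --- a fact it imports from Khashan's Proposition 2.3, which requires first observing that $R_e$ is an abelian U-nil clean ring; the case $g=e$ is then handled separately by the same nilness of $J(R_e)$. Your proof instead passes to the corner $fR$ and uses quasi-regularity to make $-fn$ simultaneously a unit and a nilpotent of $fR$. This buys something real: you never invoke Khashan's result nor the U-nil cleanness of $R_e$ (only the decomposition of the single element $a$), and you treat $g=e$ and $g\neq e$ uniformly, whereas the paper needs the nilness of $J(R_e)$ as an external input.

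One step needs repair, and it is exactly the one you flagged. The statement ``$f-z$ is a unit of $fR$ for every $z\in J^g(fR)$'' is false for a general element of the graded Jacobson radical: for $R=k[x]$ with its $\mathbb{Z}$-grading, $J^g(R)=(x)$ but $1-x$ is not invertible. (The property used in Theorem \ref{t1} is the lifting of \emph{homogeneous} units, which reduces to the degree-$e$ component.) Your application survives because the element you feed into it is homogeneous of degree $e$: since $u^{-1}\in R_{g^{-1}}$ and $n\in R_g$, the product $fu^{-1}n$ lies in $(fR)_e$, so $f+fu^{-1}n\in J^g(fR)\cap (fR)_e=J\bigl((fR)_e\bigr)$ by \cite[Corollary 2.9.3]{4}, and classical quasi-regularity in the ring $(fR)_e$ gives the invertibility of $-fu^{-1}n$ in $(fR)_e$, hence in $fR$. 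With that one-line correction (and, if you want to avoid justifying $J^g(fR)=fJ^g(R)$ altogether, you can work directly with $fu^{-1}a\in J^g(R)\cap R_e=J(R_e)$ and multiply the resulting unit of $R_e$ by $f$), the proof is complete.
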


\begin{proof}
Let $j$ be a homogeneous element of $J^g(R)$ of degree $g\in G$. Our goal is to prove that $j\in N(R)$. Indeed, according to Proposition \ref{p1} $R_e$ is U-nil clean; further, since $Idem(R_e) \subseteq Z(R)$ then $R_e$ is an abelian ring (i.e. $Idem(R_e) \subseteq Z(R_e)$). Hence, by  \cite[Porposition 2.3]{3} the Jacobson radical of $R_e$, $J(R_e)$ is nil. If $g=e$, then $j \in J^g(R)\cap R_e$. In addition, according to \cite[Corollary 2.9.3]{4} $J^g(R)\cap R_e=J(R_e)$; therefore $j$ is nilpotent. Now, suppose that $g \neq e$. Since $R$ is graded U-nil clean, then there exist $ f\in Idem(R_e)$, $u \in U(R)\cap R_g$ and $n \in N(R)\cap R_g$ such that $j=fu+n$. Let $k \in \mathbb{N}$ such that $n^k=0$. Using the binomial expansion we obtain $(fu-j)^k=(fu)^k+jx+yj=0$ for some $x,y \in R$. The condition $Idem(R_e) \subseteq Z(R)$ allows us to write $(fu)^k=f^ku^k$; moreover $f^k=f$ since $f$ is an idempotent element. Hence, since $J^g(R)$ is a two-sided ideal, we have $fu^k=-jx-yj \in J^g(R)$. Thus, $f=fu^ku^{-k} \in J^g(R) \cap R_e=J(R_e)$. But $J(R_e)$ is a nil ideal; therefore the idempotent element $f$ is  nlipotent. It follows $f=0$  and then $j=n \in N(R)$. Therefore, $J^g(R)$ is graded-nil.
\end{proof}

By combining Theorem \ref{t1} and Proposition \ref{p3} we obtain the following corollary.

\begin{cor}\label{c1}
Let $R=\oplus_{g \in G} R_g$ be $G$-graded ring such that $Idem(R_e) \subseteq Z(R)$. Then, $R$ is graded U-nil clean if and only if so is $R/J^g(R)$.
\end{cor}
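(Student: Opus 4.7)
My plan is to splice together Proposition \ref{p3} and Theorem \ref{t1} as the sentence preceding the corollary suggests. The forward direction is immediate: assuming $R$ is graded U-nil clean, the hypothesis $Idem(R_e)\subseteq Z(R)$ lets me apply Proposition \ref{p3} to conclude that $J^g(R)$ is graded-nil. Since $J^g(R)$ is a homogeneous two-sided ideal of $R$ (by \cite[Proposition 2.9.1]{4}), Theorem \ref{t1} applies with $I=J^g(R)$, and one implication of that theorem yields that $R/J^g(R)$ is graded U-nil clean.

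For the reverse direction, I again want to invoke Theorem \ref{t1} with $I=J^g(R)$, so the crux is to establish that $J^g(R)$ is graded-nil without presupposing graded U-nil cleanness of $R$. The plan here is to mimic the argument of Proposition \ref{p3}, starting instead from the quotient: Proposition \ref{p1}(1) applied to $R/J^g(R)$ gives that $(R/J^g(R))_e = R_e/J(R_e)$ is U-nil clean, and $R_e$ is abelian because $Idem(R_e)\subseteq Z(R_e)$. This will let me rerun the binomial-expansion step from the proof of Proposition \ref{p3}: pick a homogeneous $j\in J^g(R)$ of degree $g$, take a lifted graded U-nil clean decomposition $j = fu + n$ (using idempotent lifting modulo the nil radical of $R_e/J(R_e)$ and unit lifting via \cite[Proposition 2.9.1]{4}), and conclude $f\in J^g(R)\cap R_e = J(R_e)$, so $f$ is nilpotent hence zero. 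This forces $j=n\in N(R)$, so $J^g(R)$ is graded-nil, and Theorem \ref{t1} closes the implication.

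The main obstacle is precisely this backward step: Proposition \ref{p3} as stated presupposes graded U-nil cleanness of $R$, so the wording ``combining Theorem \ref{t1} and Proposition \ref{p3}'' slightly oversells how mechanical the argument is. The technical care lies in re-deriving the nilness of $J(R_e)$, and then of $J^g(R)$, from the weaker hypothesis that $R_e/J(R_e)$ is U-nil clean and abelian, rather than $R_e$ itself; once that is in place, both implications are a direct application of Theorem \ref{t1}.
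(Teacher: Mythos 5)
Your forward direction is exactly what the paper intends and it is fine: Proposition \ref{p3} gives that $J^g(R)$ is graded-nil, and one implication of Theorem \ref{t1} (with $I=J^g(R)$, which is a homogeneous two-sided ideal) passes graded U-nil cleanness to the quotient. You are also right that the converse is the delicate point, since Proposition \ref{p3} presupposes the conclusion. However, your proposed repair does not close the gap. First, in the proof of Proposition \ref{p3} the nilness of $J(R_e)$ comes from applying Khashan's Proposition 2.3 to the abelian U-nil clean ring $R_e$ itself; knowing only that $(R/J^g(R))_e\cong R_e/J(R_e)$ is U-nil clean tells you nothing about $J(R_e)$, because the Jacobson radical of that quotient is zero and the hypothesis is vacuous there. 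Second, your ``lifted decomposition'' $j=fu+n$ with $n\in N(R)$ requires lifting a homogeneous nilpotent of $R/J^g(R)$ to a nilpotent of $R$; in Theorem \ref{t1} this is exactly the step that consumes the graded-nilness of $I$, so here it presupposes the very fact you are trying to establish. The binomial-expansion step therefore never gets off the ground.

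Moreover, the gap cannot be patched, because the backward implication is false as stated. Take $R=\mathbb{Z}_{(p)}$ (or $k[[X]]$) with the trivial grading: $Idem(R_e)=\{0,1\}\subseteq Z(R)$, $J^g(R)=J(R)=pR$, and $R/J^g(R)\cong\mathbb{F}_p$ is a field, hence graded U-nil clean; but the only nilpotent of $R$ is $0$ and the only unit regular elements are $0$ and the units, so $p$ is not U-nil clean and $R$ is not graded U-nil clean. The corollary's converse needs the additional hypothesis that $J^g(R)$ is graded-nil (at which point it is just Theorem \ref{t1}), or the statement should be weakened to a one-way implication. The paper's own one-line justification, ``by combining Theorem \ref{t1} and Proposition \ref{p3},'' has the same defect; your instinct that it ``oversells how mechanical the argument is'' was correct, but the problem is more serious than a missing technical lemma.
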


Let's go back to Example \ref{e4}, which proves that the following implication:
$$ R_e ~\text{is U-nil clean}\Longrightarrow R= \oplus_{g\in G}R_g~\text{is graded U-nil clean}.$$ is false in general. Indeed, the above implication holds true under some sufficient conditions given below. Let us recall first the definition of a $PI$-ring. A ring $R$ is called $PI$-ring if there exist a natural integer $n$, and an element $P$ of $\mathbb{Z}[X_1,X_2,\dots,X_n]$ such that for every $(r_1,r_2,\dots,r_n)\in R^n$, we have $P(r_1,r_2,\dots,r_n)=0$. 

\begin{thm}\label{t2}
Let $R=\oplus_{g\in G} R_g$ be a graded-local $PI$-ring of finite support. If $R_e$ is U-nil clean, then $R$ is graded U-nil clean.
\end{thm}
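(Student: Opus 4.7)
The plan is to show that $J^g(R)$ is graded-nil and then apply Theorem~\ref{t1} to reduce the problem to the quotient $R/J^g(R)$.

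First I would use graded-locality to deduce that $R_e$ is local. Since $R$ has a unique graded-maximal right ideal, namely $J^g(R)$ (two-sided by \cite[Proposition 2.9.1]{4}), the only graded right ideals of $R/J^g(R)$ are $0$ and itself. Hence every nonzero homogeneous $\overline{a}\in R/J^g(R)$ admits a homogeneous right inverse $\overline{b}$, and applying the same remark to $\overline{b}$ produces $\overline{b}\,\overline{c}=\overline{1}$; then $\overline{a}=\overline{a}(\overline{b}\,\overline{c})=(\overline{a}\,\overline{b})\overline{c}=\overline{c}$, so $\overline{b}\,\overline{a}=\overline{b}\,\overline{c}=\overline{1}$ and $\overline{a}$ is a unit. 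Therefore $R/J^g(R)$ is graded-division, and its $e$-component $R_e/J(R_e)$ (using $J^g(R)\cap R_e=J(R_e)$ from \cite[Corollary 2.9.3]{4}) is a division ring; consequently $R_e$ is local. It follows that $Idem(R_e)=\{0,1\}\subseteq Z(R_e)$, and since $R_e$ is U-nil clean, \cite[Proposition 2.3]{3} yields that $J(R_e)$ is nil.

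Next I would establish that $J^g(R)$ is graded-nil. Take $a\in J^g(R)\cap R_g$. If $g$ has infinite order in $G$, the powers $g^n$ are pairwise distinct, so finiteness of $sup(R)$ forces $g^n\notin sup(R)$ for some $n$; then $a^n\in R_{g^n}=0$, so $a$ is nilpotent. If $g$ has finite order $m$, then $a^m\in R_e\cap J^g(R)=J(R_e)$, which is nil, so $a^m$---and hence $a$---is nilpotent. Once $J^g(R)$ is known to be graded-nil, Theorem~\ref{t1} reduces the statement to showing that $R/J^g(R)$ is graded U-nil clean; this is immediate since every homogeneous element of the graded-division ring $R/J^g(R)$ is either $0$ (with trivial decomposition) or a unit $u\in R_g$, admitting the decomposition $u=1\cdot u+0$ with $1\in Idem(R_e)$.

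The main obstacle I foresee is pinning down where the PI hypothesis actually intervenes: the sketch above appears to go through without it, so the author's intended proof probably uses PI in a different way---for instance via Amitsur's theorem (PI implies $J(R)$ nil) as an alternative route to the nilness of $J(R_e)$ that avoids \cite[Proposition 2.3]{3}, or in a more structural fashion to bypass the step establishing that $R_e$ is local. In any case, the decisive step in this approach is the graded-nilness of $J^g(R)$, and the finite-support hypothesis is exactly what is needed there to control the degrees of infinite order.
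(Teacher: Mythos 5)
Your argument is correct, but it is genuinely different from the paper's, and in fact somewhat stronger. The paper argues directly on homogeneous elements: if $a\notin J^g(R)$, graded-locality forces $aR=Ra=R$, so $a$ is a unit; if $a\in J^g(R)$, the PI hypothesis together with \cite[Theorem 3]{6} gives that $J(R)$ is nil (from $J(R_e)$ nil), and finite support gives $J^g(R)\subseteq J(R)$ by \cite[Corollary 2.9.4]{4}, so $a$ is nilpotent. Thus the paper really proves that $R$ is graded nil-good (every homogeneous element is a unit or nilpotent), and the role of PI is precisely to pass from the nilness of $J(R_e)$ to that of $J(R)$. You instead establish the graded-nilness of $J^g(R)$ intrinsically, by splitting on the order of the degree $g$: finite support kills $a^n$ when $g$ has infinite order, and $J^g(R)\cap R_e=J(R_e)$ handles the finite-order case; you then pass to the graded-division quotient $R/J^g(R)$ and invoke Theorem~\ref{t1}. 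This bypasses \cite{6} and shows that the PI hypothesis is not needed for the conclusion -- a genuine improvement on the statement. A further small point in your favor: the paper applies \cite[Proposition 2.3]{3} to $R_e$ without comment, whereas (as the paper's own Proposition~\ref{p3} suggests) that result is stated for abelian rings; your observation that graded-locality forces $R_e$ to be local, hence $Idem(R_e)=\{0,1\}$, supplies exactly the missing hypothesis. What the paper's route buys in exchange is brevity and a sharper structural conclusion (unit-or-nilpotent for every homogeneous element) without needing to verify that homogeneous elements of the graded-division quotient lift correctly through Theorem~\ref{t1}.
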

\begin{proof}
Suppose that $R_e$ is U-nil clean. According to Proposition 2.3 in  \cite{3}, the Jacobson radical $J(R_e)$ of $R_e$ is nil. Since $R$ is a $PI$-ring then according to \cite[Theorem 3]{6} the Jacobson radical $J(R)$ of $R$ is also nil. Now, let $a$ be a homogeneous element of $R$. We have indeed two cases:\\
- Case 1: $a\notin J^g(R)$. Hence, the homogeneous ideals $aR$ and $Ra$ cannot be contained in $J^g(R)$. Since $R$ is graded-local, then $aR=Ra=R$. Thus, $a$ is a unit and in particular $a$ is graded U-nil clean.\\
- Case 2: $a\in J^g(R)$. Since the support of $R$ is finite, according to Corollary 2.9.4 in \cite{4} we have that $J^g(R) \subseteq J(R)$, and hence $J^g(R)$ is a nil ideal; therefore, $a$ is nilpotent.\\
Finally, every homogeneous element is graded U-nil clean; thus $R$ is graded U-nil clean.
\end{proof}

\section{Extensions of graded U-nil clean rings}

In this section, we'll discuss the ''graded U-nil clean'' property of graded group rings as well as of trivial ring extensions. The question of when the matrix ring is graded  U-nil clean is also investigated.

\subsection{Trivial ring extensions}

In this subsection, we investigate the behaviour of the graded U-nil clean property in trivial ring extensions of graded U-nil clean rings.

\begin{thm}\label{t3}
Let $A$ be a $G$-graded ring, $E$ a $G$-graded $(A,A)$-bimodule and $R=A\propto E$ the idealization of $E$. Then, $A$ is graded U-nil clean if and only if $R$ is graded U-nil clean.
\end{thm}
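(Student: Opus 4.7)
The plan is to reduce this immediately to Theorem \ref{t1} by exhibiting a natural graded-nil ideal of $R$ whose quotient recovers $A$. Specifically, I would take $I := 0\propto E = \{(0,x) : x\in E\}$. The multiplication rule in the idealization gives $(a,x)(0,y)=(0,ay)$ and $(0,y)(a,x)=(0,ya)$, so $I$ is a two-sided ideal. Using the grading $R_g=A_g\oplus E_g$, one has $I\cap R_g = 0\oplus E_g$, and $I=\bigoplus_{g\in G}(0\oplus E_g)$; thus $I$ is a homogeneous ideal.

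Next I would verify that $I$ is graded-nil. Any homogeneous element of $I$ has the form $(0,x)$ with $x\in h(E)$, and the multiplication rule yields $(0,x)^2=(0,0)=0$. So every homogeneous element of $I$ is nilpotent (of index at most two), which is exactly the condition for $I$ to be graded-nil. Moreover the projection $\pi\colon R\to A$, $(a,x)\mapsto a$, is a surjective ring homomorphism that sends $R_g=A_g\oplus E_g$ onto $A_g$, hence is degree-preserving, and $\ker\pi = I$. Therefore $R/I\cong A$ as $G$-graded rings.

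With these two observations in hand, Theorem \ref{t1} applied to the graded-nil ideal $I$ of $R$ gives at once that $R$ is graded U-nil clean if and only if $R/I$ is graded U-nil clean, and via the graded isomorphism $R/I\cong A$ this is equivalent to $A$ being graded U-nil clean. I do not anticipate any real obstacle: once the ideal $I=0\propto E$ has been identified, the argument is purely a bookkeeping check that $I$ is homogeneous, two-sided, and squares to zero on homogeneous elements, after which all the content is absorbed into Theorem \ref{t1}.
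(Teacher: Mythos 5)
Your proposal is correct and follows exactly the paper's argument: identify $I=0\propto E$ as a homogeneous graded-nil ideal (since $I^2=0$), note that the projection gives a graded isomorphism $R/I\cong A$, and invoke Theorem \ref{t1}. Nothing further is needed.
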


\begin{proof}
It is easy to check that $0\propto E$ is a homogeneous ideal of $R$; moreover, it is a graded-nil ideal since $(0\propto E)^2=0$. On the other hand, the epimorphism $\epsilon:R \longrightarrow A$ given by $\epsilon((a,e))=a$, is a graded-homomorphism. It is clear that the kernel of $\epsilon$ is exactly $0 \propto E$; hence $R/(0\propto E)$ and $A$ are graded-isomorphic. Now, the Theorem \ref{t1} completes the proof.
\end{proof}

The previous theorem allows us give more examples in addition the one given in Example \ref{e3}, of graded U-nil clean rings which are not graded nil-good. Let's consider the graded ring $R$ taken in Example \ref{e3}; and let $E$ be any $G$-graded $(R,R)$-bimodule (where $G$ is the cyclic group of order two). Hence, the graded ring $R\propto E$ is graded U-nil clean (by Theorem \ref{t3}), which is not graded nil-good according to theorem 3.1 in \cite{5}.

\subsection{Group rings}

We continue  our investigation of extensions of graded U-nil clean rings; this subsection will be dedicated to graded group rings.

\begin{lemma}\label{l1}
Let $A=\oplus_{g\in G} A_g$ and $B=\oplus_{g\in G} B_g$ be two G-graded rings. If $\varphi:A \longrightarrow B $ is a degree-preserving homomorphism, then its kernel $\ker(\varphi)$ is a homogeneous ideal of the ring $A$.
\end{lemma}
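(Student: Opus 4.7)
The plan is to verify the two requirements in the definition of a homogeneous ideal: first that $\ker(\varphi)$ is a two-sided ideal of $A$ and second that $\ker(\varphi) = \bigoplus_{g\in G}\bigl(\ker(\varphi)\cap A_g\bigr)$. The first is immediate from the fact that $\varphi$ is a ring homomorphism, so the whole content lies in the decomposition statement.

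For the decomposition, I would take an arbitrary $x\in\ker(\varphi)$ and write it uniquely as $x=\sum_{g\in G}x_g$ with $x_g\in A_g$ and $x_g=0$ for all but finitely many $g$. Applying $\varphi$ gives $0=\varphi(x)=\sum_{g\in G}\varphi(x_g)$, and the key observation is that $\varphi$ being degree-preserving forces $\varphi(x_g)\in B_g$ for every $g\in G$. Since $B=\bigoplus_{g\in G}B_g$ is a direct sum, the decomposition of $0$ is unique, so each $\varphi(x_g)=0$. Therefore every homogeneous component $x_g$ lies in $\ker(\varphi)\cap A_g$, which shows $\ker(\varphi)\subseteq \sum_{g\in G}\bigl(\ker(\varphi)\cap A_g\bigr)$. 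The reverse inclusion is trivial, and the sum is automatically direct because it is a subsum of $A=\bigoplus_{g\in G}A_g$.

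I do not expect any genuine obstacle here; the proof is essentially a direct unwrapping of the definitions, with the only substantive ingredient being that degree-preservation transports the unique direct-sum decomposition in $A$ to the unique direct-sum decomposition in $B$. The argument does not need any of the graded Jacobson radical or U-nil clean machinery developed earlier in the paper.
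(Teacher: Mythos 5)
Your proof is correct and follows essentially the same route as the paper: decompose an element of the kernel into homogeneous components, apply $\varphi$, and use the directness of the sum in $B$ to kill each component separately. If anything, your explicit appeal to $\varphi(A_g)\subseteq B_g$ is slightly more precise than the paper's phrasing, which only says each $\varphi(a_g)$ lands in \emph{some} $B_h$ --- a condition that by itself would not rule out cancellation between components mapping into the same $B_h$.
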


\begin{proof}
It is well known that $\ker(\varphi)$ is an ideal of $A$. Hence, it remains to show that $\ker(\varphi)$ is a homogeneous ideal. Let $\sum_{g\in G} a_g \in \ker(\varphi)$; therefore $0=\varphi(\sum_{g\in G} a_g) = \sum_{g\in G} \varphi(a_g)$. Since  $ \forall g \in G$: $ \varphi(a_g) \in B_h$ for some $h\in G$, then $\varphi(a_g)=0$ for every $g\in G$; it follows that $a_g \in \ker(\varphi)$ for each $g\in G$. Hence, $\ker(\varphi)$ is a homogeneous ideal of $A$.
\end{proof}

\begin{thm}\label{t4}
Let $R=\oplus_{g\in G} R_g$ be a $G$-graded ring. Then, if $R[G]$ is graded U-nil clean then $R$ is a U-nil clean ring.
\end{thm}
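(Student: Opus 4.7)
The approach is to exhibit $R$ as a ring isomorphic to the identity component $(R[G])_e$ of the graded group ring $R[G]$, which reduces the claim directly to the first item of Proposition~\ref{p1} applied to $R[G]$. From the preliminaries the identity component is $(R[G])_e = \oplus_{h \in G} R_{h^{-1}} h$, so a typical element has the form $\sum_{h} r_h h$ with $r_h \in R_{h^{-1}}$. I would define
\[
\phi : (R[G])_e \longrightarrow R, \qquad \phi\left(\sum_h r_h h\right) = \sum_h r_h,
\]
where the right-hand side is an element of $R = \oplus_\ell R_\ell$ because the summands $r_h$ live in pairwise distinct components $R_{h^{-1}}$.

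The main technical step is to verify that $\phi$ is a ring isomorphism. Additivity is immediate. Bijectivity is componentwise: the restriction of $\phi$ to each summand $R_{h^{-1}} h$ is an abelian group isomorphism onto $R_{h^{-1}}$, and the preimage of $a = \sum_g a_g \in R$ is $\sum_g a_g g^{-1} \in (R[G])_e$. Multiplicativity is where I expect the bookkeeping to get delicate because of the twisted rule $(r_a s)(r_b t) = r_a r_b \cdot (b^{-1} s b t)$ on $R[G]$. Evaluating on two generators $r_{h^{-1}} h$ and $r_{k^{-1}} k$ of $(R[G])_e$, this rule yields $r_{h^{-1}} r_{k^{-1}} \cdot (k h k^{-1} k) = r_{h^{-1}} r_{k^{-1}} \cdot (kh)$, which $\phi$ sends to $r_{h^{-1}} r_{k^{-1}}$; meanwhile $\phi(r_{h^{-1}} h)\phi(r_{k^{-1}} k) = r_{h^{-1}} r_{k^{-1}}$ computed in $R$. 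Hence $\phi$ preserves products, and it is a ring isomorphism.

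With the isomorphism established the rest is a one-line conclusion: Proposition~\ref{p1} applied to the graded U-nil clean ring $R[G]$ gives that $(R[G])_e$ is a U-nil clean ring, and transporting this property through $\phi$ yields that $R$ is U-nil clean. The only real obstacle is the multiplicativity check for $\phi$; it works because the conjugation $k^{-1}\cdot k$ inside $k h k^{-1} k$ collapses precisely for pieces living in the degree-$e$ component of $R[G]$, and this cancellation is exactly what makes the identity component of the group ring isomorphic to $R$ rather than to a twisted variant.
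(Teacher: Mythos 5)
Your proposal is correct and follows essentially the same route as the paper: both reduce the statement to Proposition~\ref{p1}(1) applied to $R[G]$ together with the ring isomorphism $(R[G])_e\cong R$. The only difference is that you verify this isomorphism (including the multiplicativity computation $kh k^{-1}k=kh$) explicitly, whereas the paper simply cites Proposition~2.1 of N\u{a}st\u{a}sescu's paper for it and Khashan's Proposition~2.2 for transporting U-nil cleanness along the isomorphism.
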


\begin{proof}
Suppose that $R[G]$ is graded U-nil clean. According to Proposition \ref{p1}, we have that $(R[G])_e$ is a U-nil clean ring. Furthermore, Proposition 2.1 in \cite{8} proves that the two rings $(R[G])_e$ and $R$ are isomorphic. Hence, by Proposition 2.2 in \cite{3} $R$ is a U-nil clean ring.
\end{proof}

\begin{thm}
Let $R$ be a $G$-graded ring, and $H$ a normal subgroup of $G$. Suppose that $G$ is locally finite $p$-group where $p$ is a nilpotent element of $R$. If $R$ is a $G/H$-graded U-nil clean ring then $R[H]$ is also $G/H$-graded U-nil clean.
\end{thm}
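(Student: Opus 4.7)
The strategy is to exhibit a graded-nil ideal $I$ of $R[H]$ with $R[H]/I \cong R$ as $G/H$-graded rings, and then apply Theorem \ref{t1}.

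First I would introduce the augmentation map $\epsilon:R[H]\longrightarrow R$ defined by $\epsilon\bigl(\sum_{h\in H} r_h h\bigr)=\sum_{h\in H} r_h$. A direct check using the multiplication rule $(r_g g')(r_h h')=r_g r_h(h^{-1}g'hh')$ recalled in the preliminaries shows that $\epsilon$ is a ring homomorphism (the group factor collapses under $\epsilon$ to $1$). Moreover, using the description $(R[H])_C=R_C[H]$ of the $G/H$-grading inherited from $R[H]=\bigoplus_{g\in G}(R[H])_g$ (since $R_C=\bigoplus_{x\in C}R_x$ and $(R[H])_g=\bigoplus_{h\in H}R_{gh^{-1}}h$), one sees that $\epsilon$ sends $(R[H])_C$ into $R_C$ for every $C\in G/H$. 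Hence $\epsilon$ is a surjective degree-preserving $G/H$-graded homomorphism, so by Lemma \ref{l1} the ideal $I:=\ker\epsilon$ is homogeneous and $R[H]/I\cong R$ as $G/H$-graded rings.

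Next I would prove that $I$ is graded-nil. Fix a homogeneous element $\alpha\in I\cap (R[H])_C$, say $\alpha=\sum_{h\in K} r_h h$ with $K\subset H$ finite, $r_h\in R_C$, and $\sum_h r_h=0$. Decomposing each $r_h=\sum_{x\in S}r_{h,x}$ with $r_{h,x}\in R_x$ and $S\subset C$ finite, let $\widetilde H$ be the subgroup of $G$ generated by $K\cup S$. By the locally finite $p$-group assumption on $G$, $\widetilde H$ is a finite $p$-group; set $H_0:=\widetilde H\cap H$, a finite normal $p$-subgroup of $\widetilde H$, and let $R':=\bigoplus_{g\in\widetilde H}R_g$, which is a subring of $R$ containing $1$. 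The closure property of the twisted product — for $g\in\widetilde H$ and $h\in H_0$ the conjugate $h^{g}=g^{-1}hg$ lies in $\widetilde H\cap H=H_0$ — guarantees that $R'[H_0]$ is a subring of $R[H]$ which contains $\alpha$. Thus it suffices to show that the augmentation ideal $\Delta:=\ker(R'[H_0]\to R')$ is nilpotent.

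For this I would use the classical argument: since $p$ is nilpotent in $R$ (hence in $R'$), say $p^k=0$, one filters $R'$ by the powers $p^iR'$ and works modulo $pR'$, where $R'/pR'$ has characteristic $p$. For a finite $p$-group $H_0$, $\Delta$ is generated (as an $R'$-module, even with the twist) by the elements $h-1$, $h\in H_0$; the identities $(h)(h')=hh'$ among pure group-element monomials reduce the computation of $(h-1)^{p^n}$ to the standard binomial expansion, so $\Delta^{p^n}\subseteq pR'[H_0]$ for $p^n=|H_0|$. Iterating $k$ times yields $\Delta^{kp^n}\subseteq p^kR'[H_0]=0$. Consequently $\alpha$ is nilpotent in $R'[H_0]$, hence in $R[H]$, and $I$ is graded-nil. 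Since $R[H]/I\cong R$ is $G/H$-graded U-nil clean by hypothesis, Theorem \ref{t1} finishes the proof.

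The main obstacle is the nilpotence of the augmentation ideal in the presence of the twisted multiplication of $R[H]$: one must check that the classical ``binomial-plus-reduction-mod-$p$'' argument is not spoiled by the conjugation factors $h^{-1}g'h$, which is why I confine the whole computation to the subring $R'[H_0]$, where all conjugations stay inside the finite $p$-group $H_0$.
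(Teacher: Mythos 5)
Your proposal is correct and follows the same skeleton as the paper's proof: both of you use the augmentation map $\epsilon\colon R[H]\to R$ as a surjective degree-preserving homomorphism of $G/H$-graded rings, invoke Lemma \ref{l1} to see that $I=\ker\epsilon$ is homogeneous with $R[H]/I$ graded-isomorphic to $R$, and conclude via Theorem \ref{t1} once $I$ is known to be graded-nil. Where you genuinely diverge is in establishing that $I$ is graded-nil. The paper first reduces to the case where $H$ is a finite $p$-group (citing the proof of Theorem 2.3 in \cite{9}) and then quotes Theorem 9 of Connell \cite{10} to get that $\ker\epsilon$ is a nilpotent ideal; you instead argue locally, placing each homogeneous element of $I$ inside a subring $R'[H_0]$ built from the finite subgroup $\widetilde H$ generated by its finitely many supporting group elements and degrees, and proving nilpotence of the augmentation ideal there by the classical characteristic-$p$ argument. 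This buys self-containedness and handles infinite $H$ cleanly, since ``graded-nil'' only asks that each homogeneous element of $I$ be nilpotent, not that $I$ be nilpotent. Your attention to the twisted product $(r_gg')(r_hh')=r_gr_h(h^{-1}g'hh')$ is well placed: normality of $H$ keeps the conjugates $h^{-1}g'h$ inside $H_0=\widetilde H\cap H$, and since the generators $h-1$ have coefficients in $R_e$, products of elements of $\Delta$ collapse to $R'$-combinations of untwisted products $(h_1'-1)\cdots(h_m'-1)$ with all $h_i'\in H_0$, so the classical computation applies. The one loose end is the exact exponent: $\Delta^{|H_0|}\subseteq pR'[H_0]$ is asserted rather than derived, but all you need is \emph{some} finite $N$ with $\omega^N\subseteq p\,\mathbb{Z}[H_0]$, which is the standard nilpotence of the augmentation ideal of $\mathbb{F}_p[H_0]$ for a finite $p$-group; so this is cosmetic rather than a gap.
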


\begin{proof}
Suppose that $R$ is $G/H$-graded U-nil clean. As has been shown in the proof of Theorem 2.3 in \cite{9} , we can admit that $H$ is a finite $p$-group, hence $H=\{h_1,\dots,h_n\}$ where $h_i\in H$ for each $1\leq i \leq n$. Now, Let's consider the epimorphism $\epsilon: R[H] \longrightarrow R$ given by $\epsilon(\sum_{i=1}^n r_ih_i)=\sum_{i=1}^n r_i$, where $r_i \in R$ for every $ 1 \leq i \leq n$. It has been proved in page 180 of \cite{4} that the epimorphism $\epsilon$ is a degree-preserving homomorphism of $G/H$-graded rings. Therefore, by Lemma \ref{l1} the kernel of $\epsilon$ is a graded ideal of the ring $R[H]$. It follows that $R[H]/\ker(\epsilon)$ and $R$ are $G/H$-graded-isomorphic rings; therefore, $R[H]/\ker(\epsilon)$ is $G/H$-graded U-nil clean. Moreover, since $p \in N(R)$ then according to \cite[Theorem 9]{10} we have $\ker(\epsilon)$ is a nilpotent ideal of $R[H]$; hence $\ker(\epsilon)$ is a graded-nil ideal. Thus, by Theorem \ref{t1} $R[H]$ is $G/H$-graded U-nil clean.
\end{proof}

\subsection{Matrix rings}

As mentioned in the introduction; the main result of this paper, which is the U-nil clean property of graded matrix ring will be discussed in this subsection

\begin{thm}
Assume that $G$ is an Abelian group. Let $R$ be a $G$-graded ring and $n$ a natural number. Then, $R$ is graded U-nil clean if and only if $T_n(R)(\sigma)$ is graded U-nil clean, for any $\sigma \in G^n$.
\end{thm}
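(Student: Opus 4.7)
I would prove the two implications separately.

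For ``$T_n(R)(\sigma)$ graded U-nil clean $\Longrightarrow$ $R$ graded U-nil clean'', I would invoke Proposition \ref{p1}(2). The projection $\pi : T_n(R)(\sigma) \longrightarrow R$ sending an upper-triangular matrix to its $(1,1)$-entry is a surjective ring homomorphism, because for upper-triangular $A,B$ the product collapses to $(AB)_{11} = \sum_k A_{1k}B_{k1} = A_{11}B_{11}$ (the factor $B_{k1}$ vanishes for $k>1$). The abelianness of $G$ makes $\pi$ degree-preserving: if $A \in T_n(R)(\sigma)_\lambda$, then $A_{11}\in R_{g_1\lambda g_1^{-1}}=R_\lambda$. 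Hence $\pi$ is a graded homomorphism, and Proposition \ref{p1}(2) forces $R=\pi(T_n(R)(\sigma))$ to be graded U-nil clean.

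For the converse, I would start with a homogeneous $A\in T_n(R)(\sigma)_\lambda$. Each diagonal entry lies in $R_{g_i\lambda g_i^{-1}}=R_\lambda$ (using that $G$ is abelian), so by hypothesis I may decompose $A_{ii}=e_iu_i+n_i$ with $e_i\in Idem(R_e)$, $u_i\in U(R)\cap R_\lambda$ and $n_i\in N(R)\cap R_\lambda$. I would then form $E=\mathrm{diag}(e_1,\dots,e_n)$, $U=\mathrm{diag}(u_1,\dots,u_n)$, and $N=A-EU$, and check that $E$ is an idempotent in $T_n(R)(\sigma)_e$, that $U$ is a unit of $T_n(R)(\sigma)$ of degree $\lambda$ with inverse $\mathrm{diag}(u_1^{-1},\dots,u_n^{-1})$, and that $N$ is a homogeneous upper-triangular matrix of degree $\lambda$ whose diagonal is $(n_1,\dots,n_n)$ and whose strictly upper-triangular part is inherited from $A$. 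Together this exhibits $EU$ as a graded unit regular element of degree $\lambda$ in $T_n(R)(\sigma)$, as required.

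The main obstacle is the nilpotency of $N$, which is non-obvious because $R$ need not be commutative, so the diagonal and strictly upper-triangular parts of $N$ need not commute. To handle it, I would establish by induction on $m$ the identity $(N^m)_{ii}=N_{ii}^m$ valid for any upper-triangular $N$ over an arbitrary ring: the sum $\sum_k (N^{m-1})_{ik}N_{ki}$ collapses to its $k=i$ term, since $(N^{m-1})_{ik}=0$ for $i>k$ and $N_{ki}=0$ for $k>i$. Choosing $m$ so large that $n_i^m=0$ for every $i$ then makes $N^m$ strictly upper-triangular, and a strictly upper-triangular $n\times n$ matrix satisfies $(N^m)^n=0$. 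Thus $N$ is nilpotent, the decomposition $A=EU+N$ is a valid graded U-nil clean decomposition in $T_n(R)(\sigma)$, and the proof is complete.
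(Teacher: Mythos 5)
Your proof is correct, and the converse direction takes a genuinely different route from the paper's. For the implication ``$T_n(R)(\sigma)$ graded U-nil clean $\Rightarrow$ $R$ graded U-nil clean'' the two arguments are essentially the same projection-to-a-diagonal-entry idea; the paper fixes $\sigma=(e,\dots,e)$ and passes through Lemma \ref{l1} and Theorem \ref{t1} via the kernel, whereas you apply Proposition \ref{p1}(2) directly to the $(1,1)$-entry map for an arbitrary $\sigma$ (using abelianness of $G$ to see it is degree-preserving) — slightly cleaner, and it shows that graded U-nil cleanness of $T_n(R)(\sigma)$ for a \emph{single} $\sigma$ already forces it for $R$. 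For the converse, the paper does not construct anything by hand: it shows that the ideal $I$ of strictly upper-triangular matrices is a graded nilpotent (hence graded-nil) ideal, identifies $T_n(R)(\sigma)/I$ with the product $\prod_{i=1}^{n}R$ via the diagonal map, invokes Proposition \ref{p4} for the product, and then lifts through $I$ using Theorem \ref{t1}. Your argument instead exhibits the decomposition $A=EU+N$ explicitly and proves the nilpotency of $N$ from the identity $(N^m)_{ii}=N_{ii}^{m}$ together with the nilpotency of strictly upper-triangular matrices; all the verifications you list (that $E\in Idem\bigl(T_n(R)(\sigma)_e\bigr)$, that $U$ is a homogeneous unit of degree $\lambda$, and that $N$ is homogeneous of degree $\lambda$) go through, since abelianness of $G$ puts every diagonal entry of a degree-$\lambda$ matrix in $R_\lambda$. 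What the paper's route buys is brevity once Theorem \ref{t1} is in hand; what yours buys is a self-contained, elementary proof that avoids the idempotent-lifting and graded Jacobson radical machinery hidden inside Theorem \ref{t1}, and produces an explicit graded U-nil clean decomposition of each homogeneous triangular matrix.
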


\begin{proof}
Let $n$ be a natural number and suppose that $T_n(R)(\sigma)$ is graded U-nil clean for any $\sigma \in G^n$. Let's choose $\sigma=(e,e,\dots,e)$ and consider the epimorphism of rings $\varphi: T_n(R)(\sigma) \longrightarrow R$ defined by $\varphi((a_{ij})_{n\times n})=a_{11}$. It is clear that the image of $T_n(R)_g(\sigma)$ by $\varphi$ is $R_g$ for any $g\in G$; hence $\varphi$ is a degree-preserving homomorphism. According to Lemma \ref{l1}, $\ker(\varphi)$ is a graded ideal of $T_n(R)(\sigma)$.  It follows, the rings $R$ and $T_n(R)(\sigma)/\ker(\varphi)$ are graded-isomorphic. Furthermore, according to Theorem \ref{t1}, $T_n(R)(\sigma)/\ker(\varphi)$ is graded U-nil clean, and thus $R$ is graded U-nil clean as well.

Conversely, suppose that $R$ is graded U-nil clean and let $\sigma \in G^n$. Let's consider the ideal $I$ of the ring $T=T_n(R)(\sigma)$ formed by matrices of $T$ with zeros in the main diagonal. Let's prove that $I$ is a graded ideal of $T$. Let $\sum_{g\in G} A_g \in I$, with $A_g\in T_g$ ($T_g$ is the $g$-th component of the $G$-graded ring $T$) for every $g\in G$; our  aim is to show that $A_g \in I$ for any $g\in G$. Indeed, since the sum $\sum_{g\in G} R_g$ is direct, then every element of the main diagonal of $A_g$ is nil for each $g\in G$. Consequently, $A_g\in I$ for each $g\in G$;  thus $I$ is a graded ideal. On the other hand, $I$ is a nilpotent ideal and in particular a graded-nil ideal of $T$. Now, let's consider the ring epimorphism $\epsilon: T \longrightarrow \prod_{1=1}^n R$ given by $\epsilon((a_{ij})_{n\times n })=(a_{11},a_{22},\dots,a_{nn})$. Let $A \in T_g$ where $g\in G$. Since $G$ is an abelian group, then elements of the main diagonal of $A_g$ are all from $R_g$. Hence, $\epsilon$ is a graded-homomorphism. Further, the kernel of $\epsilon$ is exactly the ideal $I$; therefore, the two rings $T/I$ and $\prod_{1=1}^n R$ are graded-isomorphic. According to Proposition, \ref{p4} $\prod_{1=1}^n R$ is graded U-nil clean; it follows that $T/I$ is a graded U-nil clean ring. Now, since $I$ a graded-nil ideal, then by Theorem \ref{t1} we conclude that $T$ is graded U-nil clean.
\end{proof}

\begin{lemma}\label{l3}
Let $R=\oplus_{g\in G} R_g$ be G-graded  ring and $t \in Idem(R_e)\cap Z(R)$. If the two rings $tR$ and $(1-t)R$ are $G$-graded U-nil clean, then $R$ is graded U-nil clean.
\end{lemma}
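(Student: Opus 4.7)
The plan is to exploit the fact that a central idempotent in $R_e$ produces a Peirce-style decomposition of $R$ which is not only a ring direct product decomposition but is also compatible with the $G$-grading, so that $R$ becomes graded-isomorphic to $tR \times (1-t)R$. Once this graded isomorphism is established, Proposition \ref{p4} gives that $tR \times (1-t)R$ is graded U-nil clean (since both factors are, by hypothesis), and then Proposition \ref{p1}(2) transfers this property through the isomorphism to $R$.

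More concretely, I would first verify that since $t \in R_e$ is central, both $tR$ and $(1-t)R$ are two-sided ideals and are graded rings with unities $t$ and $1-t$ respectively, with $g$-components $tR_g$ and $(1-t)R_g$ (this is exactly the observation recalled in the preliminaries). The decomposition $a = ta + (1-t)a$ for $a \in R_g$ shows $R_g = tR_g \oplus (1-t)R_g$, so the map
\[
\Phi : tR \times (1-t)R \longrightarrow R, \qquad (x,y) \longmapsto x + y,
\]
is a ring isomorphism whose inverse sends $r \mapsto (tr,(1-t)r)$. Since $t, 1-t \in R_e$ are central, $\Phi$ sends the $g$-component $tR_g \times (1-t)R_g$ of the product bijectively onto $R_g$, i.e.\ $\Phi$ is a degree-preserving isomorphism of $G$-graded rings. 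Applying Proposition \ref{p4} to $tR$ and $(1-t)R$, the product $tR \times (1-t)R$ is graded U-nil clean, and applying Proposition \ref{p1}(2) to the graded-homomorphism $\Phi$ yields that $R$ is graded U-nil clean.

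The only real point to watch is the routine but essential check that the ring direct product decomposition respects the $G$-grading in the sense required, in particular that idempotents, units, and nilpotents in the factors reassemble correctly in $R$. If one prefers to avoid the isomorphism argument, the same check can be done by hand: writing $a \in R_g$ as $a = ta + (1-t)a$, taking graded U-nil clean decompositions $ta = e_1 u_1 + n_1$ in $tR$ and $(1-t)a = e_2 u_2 + n_2$ in $(1-t)R$, and using the orthogonality $t(1-t)=0$ to verify that $e_1+e_2 \in \mathrm{Idem}(R_e)$, that $u_1+u_2 \in U(R) \cap R_g$ (with inverse the sum of the component inverses), that $(e_1+e_2)(u_1+u_2) = e_1u_1 + e_2u_2$, and that $n_1+n_2$ is homogeneous of degree $g$ and nilpotent since $n_1 n_2 = n_2 n_1 = 0$. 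Either way the work is essentially the same; the cleaner route through Propositions \ref{p4} and \ref{p1}(2) hides these verifications inside the already-established lemmas.
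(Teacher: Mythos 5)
Your proposal is correct and follows essentially the same route as the paper: the Peirce decomposition $R=tR\oplus(1-t)R$, the graded isomorphism $tR\times(1-t)R\longrightarrow R$ given by $(x,y)\mapsto x+y$, and an appeal to Proposition \ref{p4} (with the transfer back to $R$ via the isomorphism, which the paper leaves implicit and you route through Proposition \ref{p1}(2)). The extra hand verification you sketch is not needed but is a sound sanity check of the same argument.
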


\begin{proof}
Assume that $tR$ and $(1-t)R$ are graded U-nil clean. Since $t\in Z(R)$ then $1-t$ is a central element of $R$ as well; therefore we can apply peirce decomposition as follow: $R=tR\oplus(1-t)R$. On the other hand, the ring homomorphism $\epsilon:tR\times (1-t)R\longrightarrow R$ defined by $\epsilon((tx,(1-t)y)=tx+(1-t)y$ is a graded-isomorphism. According to Proposition \ref{p4}, the ring $tR\times (1-t)R$ is graded U-nil clean. Therefore, $R$ is graded U-nil clean.

\end{proof}

We recall that idempotent elements $f_1,f_2,\dots,f_n$ of a ring are called orthogonal if $f_if_j=0$ whenever $i\neq j$. Next, we give a generalization of Lemma \ref{l3} using again the peirce decomposition.

\begin{lemma}\label{l4}
Let $R=\oplus_{g\in G}R_g$ be a $G$-graded ring, and suppose that $1=f_1+\dots+f_n$ in $R$ where $f_i\in Idem(R_e)\cap Z(R)$ are orthogonal for $1\leq i \leq n$. If $f_iR$ is graded U-nil clean for each $i\in \{1,\dots,n\}$, then $R$ is graded U-nil clean.
\end{lemma}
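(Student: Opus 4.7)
The plan is to prove the statement by induction on $n$, with Lemma \ref{l3} supplying the inductive step. For the base case $n=1$ we get $f_1=1$, so $R=f_1R$ is graded U-nil clean by hypothesis; and $n=2$ reduces directly to Lemma \ref{l3} with $t=f_1$, $1-t=f_2$.

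For the inductive step, assume the lemma has been established for any family of $n-1$ orthogonal central idempotents in $R_e$ that sum to $1$. I would set
$$ t := f_1 + f_2 + \cdots + f_{n-1}. $$
Using the orthogonality of the $f_i$ one checks $t^2=t$; since each $f_i\in R_e\cap Z(R)$ and both are additively closed, $t\in Idem(R_e)\cap Z(R)$, and $1-t=f_n$. Thus the hypotheses of Lemma \ref{l3} will apply once I know that both $tR$ and $(1-t)R=f_nR$ are graded U-nil clean.

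The ring $f_nR$ is graded U-nil clean by assumption, so the work is to verify that $tR$ is graded U-nil clean by invoking the induction hypothesis \emph{inside} $tR$. Recall from the preliminaries that $tR$ is a $G$-graded ring with unit $t$ and $g$-component $(tR)_g=tR_g$. For $1\leq i\leq n-1$ the identity $f_i t=f_i$ shows that $f_i\in tR_e=(tR)_e$, that the $f_i$ remain orthogonal central idempotents in $tR$, that $f_1+\cdots+f_{n-1}=t$ is the unit of $tR$, and that $f_i(tR)=f_iR$, which is graded U-nil clean by the overall hypothesis. Applying the induction hypothesis to $tR$ then gives that $tR$ is graded U-nil clean, and Lemma \ref{l3} concludes that $R$ is graded U-nil clean.

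The only genuine step to check is that the graded structure and the idempotent data really do restrict correctly to $tR$, which is routine given the compatibility $(tR)_g=tR_g$ and $f_i t=f_i$; no substantive obstacle is expected.
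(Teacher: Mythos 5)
Your proof is correct. Note that the paper states this lemma without proof, indicating only that it is "a generalization of Lemma \ref{l3} using again the peirce decomposition"; the intended argument is thus presumably the direct one: since the $f_i$ are orthogonal central idempotents summing to $1$, the map $f_1R\times\dots\times f_nR\longrightarrow R$, $(f_1x_1,\dots,f_nx_n)\mapsto f_1x_1+\dots+f_nx_n$, is a graded isomorphism, and Proposition \ref{p4} applied to the $n$-fold product finishes in one step. You instead induct on $n$ and use Lemma \ref{l3} itself as the inductive engine, which is a genuinely different organization: it spares you from re-verifying the $n$-fold Peirce decomposition and the gradedness of the $n$-fold isomorphism, at the cost of checking that the data $(f_1,\dots,f_{n-1})$ restricts correctly to the corner ring $tR$ with $t=f_1+\dots+f_{n-1}$ --- which you do carefully via $f_it=f_i$, $(tR)_g=tR_g$, and $f_i(tR)=f_iR$. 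One small point of hygiene: since the inductive hypothesis is invoked for the ring $tR$ rather than for $R$, it must be phrased as holding for \emph{every} $G$-graded ring equipped with $n-1$ such idempotents, not merely for $R$; your wording suggests the latter, but your use of it is the former, so this is a matter of statement rather than substance. Either route is acceptable; the paper's hinted route is marginally shorter, yours makes the dependence on Lemma \ref{l3} explicit.
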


\begin{thm}[Main result]
Let $R=\oplus_{g\in G}R_g$ be a $G$-graded ring. If $R$ is graded U-nil clean, then $M_n(R)(\sigma)$ is graded U-nil clean for every natural number $n$ and every $\sigma \in G^n$.
\end{thm}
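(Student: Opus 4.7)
My plan is to prove the theorem by induction on $n$. The base case $n=1$ is essentially the hypothesis: for $\sigma=(g_1)$, the ring $M_1(R)(\sigma)$ has $\lambda$-component $R_{g_1\lambda g_1^{-1}}$, and the obvious bijection between $M_1(R)(\sigma)$ and $R$ is a graded-isomorphism once we relabel the grading group by inner conjugation with $g_1$; since graded U-nil cleanness is preserved by relabeling components, the case $n=1$ follows from the assumption on $R$.

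For the inductive step, I would take a homogeneous matrix $A\in M_n(R)_\lambda(\sigma)$ and apply the graded U-nil clean decomposition of $R$ to the $(1,1)$-entry $a_{11}\in R_{g_1\lambda g_1^{-1}}$, writing $a_{11}=fu+n$ with $f\in \mathrm{Idem}(R_e)$, $u\in U(R)\cap R_{g_1\lambda g_1^{-1}}$, and $n\in N(R)\cap R_{g_1\lambda g_1^{-1}}$. Then, using degree-$e$ elementary matrices (which are units of $M_n(R)(\sigma)$ with off-diagonal entries in components $R_{g_ig_j^{-1}}$) to clear the rest of the first row and column, I would reduce $A$ to a block form of the type $\begin{pmatrix} fu+n & 0 \\ 0 & A' \end{pmatrix}$ (up to a homogeneous nilpotent correction), where $A'\in M_{n-1}(R)(\sigma')$ for a suitable $\sigma'\in G^{n-1}$ obtained by deleting $g_1$ from $\sigma$. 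Applying the induction hypothesis to $A'$ yields a graded U-nil clean decomposition, which one then assembles with the decomposition of the first diagonal entry into the sought decomposition of $A$.

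The main obstacle I expect is the bookkeeping around homogeneity throughout the clearing step: the required elementary matrices have off-diagonal entries forced to lie in very specific components $R_{g_ig_j^{-1}}$, and when such components are trivial the direct row/column reduction can fail. A safer path is to route the argument through the tools already established, by identifying a graded-nil ideal $I$ of $M_n(R)(\sigma)$ (for instance, the ideal generated by homogeneous nilpotent entries, or the graded Jacobson radical once it has been shown to be graded-nil in this setting) so that the quotient $M_n(R)(\sigma)/I$ is graded-isomorphic to a direct product of graded rings built from $R$—handled by Proposition \ref{p4}—and then lifting the conclusion via Theorem \ref{t1}. Bringing Lemma \ref{l4} into play would be natural, since the matrix units $e_{ii}$ form a system of orthogonal homogeneous idempotents in $M_n(R)_e(\sigma)$ summing to the identity, but the centrality hypothesis in that lemma fails for $n\geq 2$; circumventing this, perhaps by working in a quotient in which the relevant idempotents do become central, is the most delicate point to execute cleanly.
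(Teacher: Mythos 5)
There is a genuine gap: none of the three routes you sketch is carried to completion, and each one stalls at a real obstruction rather than a bookkeeping issue. The elementary-matrix reduction in your inductive step requires clearing $a_{1j}$ using degree-$e$ transvections $1+re_{1j}$ with $r$ confined to the component $R_{g_1g_j^{-1}}$, and there is no reason such an $r$ exists (this already fails over a field when the relevant component is zero, and over a general graded ring one would additionally need to divide by diagonal entries). Your ``safer path'' of finding a graded-nil ideal $I$ with $M_n(R)(\sigma)/I$ a direct product of copies of $R$ is exactly what the paper does for the \emph{triangular} ring $T_n(R)(\sigma)$, where the strictly upper triangular matrices form a nilpotent graded ideal; but it cannot work for the full matrix ring, since the matrices with zero diagonal do not form an ideal of $M_n(R)$, and in fact every ideal of $M_n(R)$ has the form $M_n(I)$ for an ideal $I$ of $R$, so no quotient of $M_n(R)(\sigma)$ by a graded-nil ideal is a product of copies of $R$ when $n\geq 2$.

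Your third route is the one the paper actually takes: apply Lemma \ref{l4} to the orthogonal idempotents $e_{11},\dots,e_{nn}$ summing to $1$. You are right to flag the centrality hypothesis as the delicate point --- and you should know that the paper's proof simply asserts that each $e_{ii}$ lies in the center of $M_n(R)(\sigma)$, which is false for $n\geq 2$ (for instance $e_{11}e_{12}=e_{12}\neq 0=e_{12}e_{11}$). So the gap you identified is not an artifact of your write-up; it is present in the published argument as well. What is actually needed to make this strategy work is a Peirce-decomposition lemma for \emph{non-central} idempotents, in the spirit of the Han--Nicholson argument for clean rings: one writes $M_n(R)(\sigma)$ as a generalized matrix ring over the corner rings $e_{ii}M_n(R)(\sigma)e_{ii}\cong R$ and proves directly that a homogeneous element decomposes, handling the off-corner blocks explicitly. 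Neither your proposal nor the paper supplies such a lemma, so as it stands the theorem remains unproved by both.
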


\begin{proof}
Let $n$ be an arbitrary natural  number and $\sigma \in G^n$. It is clear that $e_{11},\dots,e_{nn} $ are orthogonal idempotents of $M_n(R)_e(\sigma)$, where $e_{ii}$ is the matrix having 1 on the $(i,i)$-position and 0 elsewhere. Moreover, $e_{ii}$ is in the center of $M_n(R)(\sigma)$ for each $1\leq i \leq n$, and we have that $e_{11}+\dots+e_{nn}=1$. On the other hand, the ring homomorphism $\phi_i:e_{ii}M_n(R)(\sigma) \longrightarrow R$ given by $\phi(e_{ii}(a_{kj})_{n\times n})=a_{ii}$ is a graded-isomorphism for each $i\in \{1,\dots,n\}$. Hence, $e_{ii}M_n(R)(\sigma)$ is a graded U-nil clean ring for every $i\in \{1,\dots,n\}$. Therefore, according to Lemma \ref{l4} $M_n(R)(\sigma)$ is $G$-graded U-nil clean.
\end{proof}


\begin{thebibliography}{9}
\addcontentsline{toc}{chapter}{Bibliographie} 







\bibitem{2} Anderson, F. W., Fuller, K. R. (1992). \textit{Rings and Categories of Modules}, Berlin/Heidelberg/New York: Springer.


\bibitem{7} Anderson, D.D., Winders, M., \textit{Idealization of a module}, J. Commut. Algebra \textbf{1}(1) (2009), 3-56.



\bibitem{5} Choulli, H., Mouanis, H., Namrok, I., \textit{Group graded rings with the nil-good property}, Communications in Algebra \textbf{50(4)} (2022), 1-10

\bibitem{19} Choulli, H., Kchit, O., Mouanis, H. and Namrok, I., \textit{On graded trinil clean rings}, ANNALI DELL’UNIVERSITA’ DI FERRARA (2022) https://doi.org/10.1007/s11565-022-00450-5




\bibitem{10} Connell, I. G., \textit{On the group ring}, Can. j. math. \textbf{15} (1963), 650–685.



\bibitem{12}  Danchev, P.,  \textit{Nil-good unital rings}, Internat J. Algebra \textbf{10} (2016), 239-252.




\bibitem{20}  Glaz, S., \textit{Commutative coherent rings}, Lecture Notes in Mathematics,
\textbf{1371}, Springer-Verlag, Berlin (1989).


\bibitem{23} Hongbo, Z. and Wenting, T., \textit{Generalized clean rings}, J. Nanjing Univ. Math. Biquarterly 22 (2005), 183-188.




\bibitem{21}  Huckaba, J.A., \textit{Commutative rings with zero divisors}, Marcel Dekker, New York, (1988).



\bibitem{16} Ilić-Georgijević, E.,  \textit{On graded 2-nil-good rings}, Kragujevac Journal of Mathematics \textbf{43}(4) (2019), 513-522.

\bibitem{17} Ilić-Georgijević, E.,  \textit{On graded $UJ$-rings}, Kragujevac Journal of Mathematics \textbf{43}(4) (2020), 513-522.


\bibitem{15} Ilić-Georgijević, E. and
 Şahinkaya, S.,  \textit{On graded nil clean rings}, Comm. Algebra \textbf{46}(9) (2018),
4079–4089.




\bibitem{6} Kelarev, A. V., Okniński, J., \textit{On group graded rings satisfying polynomial identities}, Glasgow
Math. J. \textbf{37} (1995), 205–210.







\bibitem{3} Khashan, H., \textit{Rings whose elements are a sum of a unit regular and nilpotent}, Italian Journal of Pure and Applied Mathematics \textbf{46} (2021), 645–658.






\bibitem{8} Năstăsescu, C., \textit{Group rings of graded rings applications}, J. Pure Appl. Algebra \textbf{33(3)} (1984), 313–335.






\bibitem{4} Năstăsescu, C. and Van Oystaeyen, F., \textit{Methods of graded rings}, Lecture Notes in Mathematics \textbf{1836}, Springer, Berlin, Heidelberg, 2004. 


\bibitem{11}  Nicholson, W. K., \textit{Lifting idempotents and exchange rings}, Trans. Amer. Math. Soc. \textbf{229} (1977),
269–278. 

\bibitem{9} Sahinkaya, S., Tang, G., Zhou, Y., \textit{Nil-clean group rings}, J. Algebra Appl. \textbf{16(07)} (2017), 1750135.

\end{thebibliography}
\end{document}